\newtheorem{thm}{Theorem}
\newtheorem{cor}[thm]{Corollary}
\newtheorem{lem}[thm]{Lemma}
\newtheorem{prop}[thm]{Proposition}
\theoremstyle{remark}
\newtheorem{rem}[thm]{Remark}
\begin{document}
\title[Saddle point solutions for non-local elliptic operators]
{Saddle point solutions \\for non-local elliptic operators}
\author{Alessio Fiscella}
\address[A.~Fiscella]{Dipartimento di Matematica ``Federigo Enriques'', Universit\`a di Milano\\
Via Cesare Saldini, 50  20133 Milano ITALY}
\email{alessio.fiscella@unimi.it}

\date{\today}
%\subjclass{}

\begin{abstract}
The paper deals with equations driven by a non-local integrodifferential operator $\mathcal L_K$ with homogeneous Dirichlet boundary conditions.
These equations have a
variational structure and we find a solution for them
using the Saddle Point Theorem. We prove this result for a general
integrodifferential operator of fractional type
and from this, as a particular case, one can derive an
existence theorem for the fractional Laplacian, finding 
solutions of the equation
$$ \left\{
\begin{array}{ll}
(-\Delta)^s u=f(x,u) & {\mbox{ in }} \Omega\\
u=0 & {\mbox{ in }} \mathbb{R}^n\setminus \Omega\,,
\end{array} \right.
$$
where the nonlinear term $f$ satisfies a linear growth condition.
\end{abstract}
\maketitle
\section{Introduction}
In this paper we deal with the following problem
\begin{equation}
\mbox{
$\left\{\begin{array}{ll}
$$-\mathcal L_Ku=f(x,u) & \mbox{in } \Omega,$$\\

$$u=0 & \mbox{in } \mathbb{R}^{n}\setminus\Omega$$
\end{array}
\right.$}\label{P}
\end{equation}
where $\Omega\subset\mathbb{R}^{n}$ is an open and bounded set, $f:\Omega\times\mathbb{R}\rightarrow\mathbb{R}$ is a Carath\'eodory function whose properties will be introduced later and $\mathcal L_K$ is a non-local operator formally defined as follows:
\begin{equation}
\mathcal L_Ku(x)=\frac{1}{2}\int_{\mathbb{R}^{n}}(u(x+y)+u(x-y)-2u(x))K(y)dy,
\end{equation}
for any $x\in\mathbb{R}^{n}$, where $K:\mathbb{R}^{n}\setminus \left\{0\right\}\rightarrow(0,+\infty)$ is a given function.
In the case where $K(x)=\left|x\right|^{-(n+2s)}$, for a given $s\in(0,1)$, there are several studies about this problem (see \cite{valpal} and references therein). In this case problem \eqref{P} becomes
\begin{equation}
\mbox{
$\left\{\begin{array}{ll}
$$(-\Delta)^{s} u=f(x,u) & \mbox{in } \Omega,$$\\

$$u=0 & \mbox{in } \mathbb{R}^{n}\setminus \Omega,$$
\end{array}
\right.$}\label{p}
\end{equation}
where $-(-\Delta)^{s}$ is the fractional Laplace operator
which (up to normalization factors) may be formally defined as
\begin{equation}\label{laplacian}
-(-\Delta)^s u(x)=
\frac{1}{2}
\int_{\mathbb{R}^{n}}\frac{u(x+y)+u(x-y)-2u(x)}{|y|^{n+2s}}\,dy,
\end{equation}
for any $x\in\mathbb{R}^{n}$.

As we said, here problems \eqref{P} and \eqref{p} are only expressed in a formal way. In classical terms, the definition in \eqref{laplacian} makes sense if $u\in C^2_0(\Omega)$, for example. However, under suitable assumptions on $f$ and $K$, we can express problems \eqref{P} and \eqref{p} in a variational form which allows us to give a simple and complete explanation and also to set the study of \eqref{P}. In this way problem \eqref{P} becomes the Euler-Lagrange equation of a suitable functional defined in a suitable space.

For this, we assume that $K$ satisfies the following conditions:
\begin{equation}\label{K1}
mK\in L^{1}(\mathbb{R}^{n}), \,\,\mbox{where}\,\, m(x)=\min\left\{\left|x\right|^{2},1\right\};
\end{equation}
\begin{equation}\label{K2}
\begin{alignedat}2
&\mbox{there exists}\,\, \theta >0 \,\,\mbox{and}\,\, s\in (0,1) \,\,\mbox{such that}\,\, K(x)\geq\theta\left|x\right|^{-(n+2s)}\\
&\mbox{for any}\,\, x\in\mathbb{R}^{n}\setminus\left\{0\right\}.
\end{alignedat}
\end{equation}

The assumptions of the function $f$ have a direct influence on the topological structure of the problem.
When the function $f$ satisfies superlinear and subcritical growth conditions, the functional associated to problem \eqref{P} satisfies the geometry of the Mountain Pass Theorem; see for example \cite{sv2}.
In \cite{sv5} the right-hand side of equation \eqref{P} is equal to $f(x,u)+\lambda u$, where $\lambda$ is a real parameter and the nonlinear term $f$ satisfies superlinear and subcritical
growth conditions. In this case critical points of the Euler-Lagrange functional can be obtained by using both the Mountain Pass Theorem and the Linking Theorem, depending on the value of $\lambda$.

In view of our problem we assume that, in addition to the usual Carath\'eodory conditions, $f$ also satisfies the following condition:
\begin{equation}\label{f1}
\begin{alignedat}2
&\mbox{there exist} \,\,a\in L^{2}(\Omega)\,\, \mbox{and}\,\, b\geq 0\,\, \mbox{such that}\,\, \left|f(x,t)\right|\leq a(x)+b\left|t\right|\\
&\mbox{for any}\,\,t\in\mathbb{R}\,\,\mbox{and a.e.}\,\, x\in \Omega.
\end{alignedat}
\end{equation}

Now, we introduce the functional spaces. Here, the functional space $X$ denotes the linear space of Lebesgue measurable functions $u:\mathbb{R}^n\rightarrow\mathbb{R}$ such that 
	\[\mbox{the map}\,\,\,
(x,y)\mapsto (u(x)-u(y))^2 K(x-y)\,\,\, \mbox{is in}\,\,\, L^1\big(Q,\,dxdy\big),
\]
where $Q:=\mathbb{R}^{2n}\setminus({\mathcal C}\Omega\times{\mathcal C}\Omega)$.
The space $X$ is endowed with the norm defined as
\begin{equation}\label{norma}
\left\|u\right\|_{X}=\Big(\int_\Omega \left|u(x)\right|^2dx+\int_Q |u(x)-u(y)|^2K(x-y)dx\,dy\Big)^{1/2}\,.
\end{equation}
It is immediate to observe that bounded and Lipschitz functions belong to $X$ (see \cite{svnew, sv2} for further details on space $X$).
Moreover, we denote with $Z$ the closure of $C^{\infty}_{0}(\Omega)$ in $X$.

Now, we can state in a precise way problem \eqref{P} by writing it in the variational form:
\begin{equation}
\mbox{
$\left\{\begin{array}{lll}
$$\displaystyle\int_{Q} (u(x)-u(y))(\varphi(x)-\varphi(y))K(x-y) dx\,dy\\
\qquad \qquad \qquad\qquad\qquad=\displaystyle\int_{\Omega} f(x,u(x))\varphi(x)dx\quad\mbox{for any}\,\,\varphi \in Z$$\\
$$u\in Z$$.
\end{array}
\right.$}\label{wf}
\end{equation}
Thanks to our assumptions on $\Omega$, $f$ and $K$, all the integrals in \eqref{wf} are well defined if $u$, $\varphi\in Z$. We also point out that the odd part of function $K$ gives no contribution to the integral of the left-hand side of \eqref{wf}. Therefore, it would be not restrictive to assume that $K$ is even.

Now, we introduce the main result of the paper. Here, we denote with $\lambda_1, \lambda_2, \ldots$ the eigenvalues of $-\mathcal L_K$ which are briefly recalled in Proposition \ref{autovalori} (see also \cite[Section 3]{sv5}).
\begin{thm} \label{Thsaddle}
Let $\Omega$ be a bounded subset of $\mathbb{R}^{n}$ and let $K$ and $f$ be two functions satisfying assumptions \eqref{K1}--\eqref{f1}. Moreover, by setting
\begin{equation}\label{finf}
\liminf_{\left|t\right|\rightarrow \infty}\frac{f(x,t)}{t}:=\underline{\alpha}(x)\quad \mbox{and} \quad\limsup_{\left|t\right|\rightarrow \infty}\frac{f(x,t)}{t}:=\overline{\alpha}(x)\quad \mbox{for a.e.}\,\, x\in\Omega,
\end{equation}
we assume that one of the two following conditions is satisfied: either $\overline{\alpha}(x)<\lambda_{1}$ for a.e. $x\in\Omega$, or there exists $k\in\mathbb{N}^*$ such that $\lambda_{k}<\underline{\alpha}(x)\leq\overline{\alpha}(x)<\lambda_{k+1}$ for a.e. $x\in\Omega$. Then, problem \eqref{wf} admits a solution $u\in Z$.
\end{thm}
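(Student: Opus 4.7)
I introduce the Euler--Lagrange functional $J_K:Z\to\R$,
\[
J_K(u):=\tfrac{1}{2}\int_Q|u(x)-u(y)|^2K(x-y)\,dx\,dy-\int_\Omega F(x,u(x))\,dx,\qquad F(x,t):=\int_0^t f(x,\tau)\,d\tau,
\]
whose critical points are exactly the weak solutions of \eqref{wf}. Condition \eqref{f1} together with the compact embedding $Z\hookrightarrow L^2(\Omega)$ makes $J_K$ of class $C^1$. From Proposition~\ref{autovalori} I use the eigenvalues $0<\lambda_1<\lambda_2\leq\cdots$ with eigenfunctions $\{e_j\}$ forming an $L^2$-orthonormal basis, the orthogonal decomposition $Z=\mathbb{H}_k\oplus\mathbb{P}_{k+1}$ with $\mathbb{H}_k=\mathrm{span}(e_1,\dots,e_k)$, and the Rayleigh bounds $[u]_K^2\leq\lambda_k\|u\|_{L^2(\Omega)}^2$ on $\mathbb{H}_k$ and $[u]_K^2\geq\lambda_{k+1}\|u\|_{L^2(\Omega)}^2$ on $\mathbb{P}_{k+1}$, where $[u]_K^2:=\int_Q|u(x)-u(y)|^2K(x-y)\,dx\,dy$.

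\textbf{Palais--Smale.} The first step is to verify the Palais--Smale condition for $J_K$. For a sequence $\{u_n\}\subset Z$ with $J_K(u_n)$ bounded and $J_K'(u_n)\to 0$ in $Z^*$, the only non-trivial point is boundedness of $\|u_n\|_Z$; once this holds, the compact embedding $Z\hookrightarrow L^2(\Omega)$ together with \eqref{f1} yields strong convergence in $Z$ along a subsequence by a standard computation. To get boundedness I argue by contradiction: assume $\|u_n\|_Z\to\infty$, set $v_n:=u_n/\|u_n\|_Z$, extract $v_n\rightharpoonup v$ in $Z$ (strongly in $L^2(\Omega)$), and divide the test identity $J_K'(u_n)[\varphi]=o(\|\varphi\|_Z)$ by $\|u_n\|_Z$. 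Passing to the limit, using that $f(x,u_n)/\|u_n\|_Z$ converges weakly in $L^2(\Omega)$ to $\alpha_\infty(x)\,v(x)$ with $\underline\alpha(x)\leq\alpha_\infty(x)\leq\overline\alpha(x)$ on $\{v\neq 0\}$, I find that $v$ solves the weighted eigenvalue problem $-\mathcal L_K v=\alpha_\infty v$, which contradicts either $\overline\alpha<\lambda_1$ or $\lambda_k<\underline\alpha\leq\overline\alpha<\lambda_{k+1}$ via the monotonicity of eigenvalues in the weight.

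\textbf{Geometry and saddle point.} In the first case $\overline\alpha<\lambda_1$, the bound $F(x,t)\leq\tfrac{1}{2}(\overline\alpha(x)+\eps)t^2+C_\eps(a(x)|t|+1)$ combined with $\lambda_1\|u\|_{L^2(\Omega)}^2\leq[u]_K^2$ makes $J_K$ coercive and weakly lower semicontinuous; direct minimization delivers the critical point. In the second case, writing $u=p+q\in\mathbb{H}_k\oplus\mathbb{P}_{k+1}$, I check that $J_K(p)\to-\infty$ as $\|p\|_Z\to\infty$ in the finite-dimensional space $\mathbb{H}_k$ and that $J_K$ is bounded below on $\mathbb{P}_{k+1}$; Rabinowitz's Saddle Point Theorem combined with the Palais--Smale condition then yields a critical point. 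The main obstacle is that $\underline\alpha-\lambda_k$ and $\lambda_{k+1}-\overline\alpha$ are only positive a.e., not uniformly, so the two geometric estimates cannot be obtained by a crude pointwise inequality. I would prove them by contradiction and normalization: if $J_K(p_n)\geq-C$ with $\|p_n\|_Z\to\infty$ in $\mathbb{H}_k$, set $w_n:=p_n/\|p_n\|_Z$; the finite dimensionality of $\mathbb{H}_k$ supplies a strong limit $w$ with $[w]_K=1$ and hence $\|w\|_{L^2(\Omega)}>0$, and Fatou combined with the asymptotic lower bound $F(x,t)\geq\tfrac{1}{2}(\underline\alpha(x)-\eps)t^2-C_\eps$ on the sequence $p_n(x)=\|p_n\|_Z\,w_n(x)$ leads to $\int_\Omega\underline\alpha(x)\,w(x)^2\,dx\leq[w]_K^2\leq\lambda_k\|w\|_{L^2(\Omega)}^2$, contradicting $\underline\alpha>\lambda_k$ a.e.\ on $\{w\neq 0\}$. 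The lower bound on $\mathbb{P}_{k+1}$ is symmetric, using $\overline\alpha<\lambda_{k+1}$ a.e.\ together with the Rayleigh estimate $[q]_K^2\geq\lambda_{k+1}\|q\|_{L^2(\Omega)}^2$.
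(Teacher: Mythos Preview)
Your strategy matches the paper's: direct minimization when $\overline\alpha<\lambda_1$, and the Saddle Point Theorem in the gap case $\lambda_k<\underline\alpha\le\overline\alpha<\lambda_{k+1}$, with the geometry on $H_k$ and $\mathbb P_{k+1}$ obtained by normalization and Fatou, and PS boundedness via blow-up to a weighted linear problem. Two steps, however, are not yet closed.

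In the PS argument you pass to the limit and say that $v$ solving $-\mathcal L_K v=\alpha_\infty v$ with $\underline\alpha\le\alpha_\infty\le\overline\alpha$ ``contradicts'' the spectral hypothesis. But $v=0$ always solves that equation, and weak convergence $v_n\rightharpoonup v$ in $Z$ does not exclude $v=0$. The paper in fact \emph{deduces} $v=0$ from your weighted-eigenvalue step (this is its Lemma~\ref{lemmanuovo}) and then obtains the contradiction in a separate step (Proposition~\ref{prop palais}, Step~II): testing $\mathcal J'(u_n)$ against $v_n=u_n/\|u_n\|_Z$ gives $1-\int_\Omega \tfrac{f(x,u_n)}{\|u_n\|_Z}\,v_n\,dx\to 0$, while $f(\cdot,u_n)/\|u_n\|_Z$ bounded in $L^2$ and $v_n\to 0$ in $L^2$ force the integral to $0$. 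You are missing this step. Separately, in the first case your bound $F(x,t)\le\tfrac12(\overline\alpha(x)+\eps)t^2+C_\eps(a(x)|t|+1)$ with $C_\eps$ independent of $x$ is not justified (the threshold in the $\limsup$ is $x$-dependent with no integrable control), and even granting it, $\overline\alpha(x)<\lambda_1$ only a.e.\ does not yield coercivity by a crude inequality---for exactly the reason you correctly flag in the second case. The paper proves coercivity here by the very same normalization/Fatou argument you use on $\mathbb P_{k+1}$; you should do likewise.
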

\begin{rem}
We notice that, in our framework,
no solution of problem \eqref{wf}
is known from the beginning,
unlike the cases treated in \cite{sv2, sv5}
where the problems considered admit
the trivial solution $u=0$ (indeed, in our case,
$f(x,0)+h(x)$ may not vanish and $u=0$ may not be a solution).
\end{rem}

The proof of Theorem \ref{Thsaddle} relies on the Saddle Point Theorem (see, for instance, \cite{rabinowitz}). In order to check the geometric assumptions needed for applying this result, we perform some energy estimates in fractional Sobolev spaces.
Indeed, Theorem~\ref{Thsaddle} is the fractional analog of a result valid for
the classical Laplacian (see, e.g.,~\cite[Theorem 4.1.1]{Mugnai}). As a matter of fact,
we plan to consider further applications of the Saddle Point
Theorem for fractional operators for asymptotically linear
terms in a forthcoming paper.

It is an interesting question if weak solutions of problem \eqref{wf} solve also problem \eqref{P} in an appropriate strong sense. 
Some interesting reults about this problem can be found in \cite{sv6} (see also \cite[Theorem 5]{BFV}) and a more exhaustive answer will be provided in a forthcoming paper.

Moreover, it is worth pointing out that the solution found in Theorem \ref{Thsaddle} is unique, under a suitable condition on the nonlinearity.
\begin{cor}\label{cor}
Under the same assumptions of Theorem \ref{Thsaddle} and if in addition there exists a $k\in\mathbb{N}^*$ such that
\begin{equation}\label{f2}
\lambda_k<\frac{f(x,s)-f(x,t)}{s-t}<\lambda_{k+1}\quad \mbox{for any}\,\, s,\, t\in\mathbb{R}\,\, \mbox{with}\,\, s\neq t\,\,\mbox{and a.e.}\,\, x\in\Omega\,,
\end{equation}
then the solution of problem \eqref{wf} is unique.
\end{cor}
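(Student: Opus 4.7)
The plan is a standard contradiction argument based on the spectral gap enclosing the slope of $f$. Assume \eqref{wf} admits two distinct solutions $u_1, u_2 \in Z$, and set $w := u_1 - u_2 \neq 0$. Subtracting the two weak formulations and testing with an arbitrary $\varphi \in Z$ gives
\[
\langle w, \varphi \rangle_K = \int_\Omega \bigl(f(x,u_1(x)) - f(x,u_2(x))\bigr)\varphi(x)\,dx,
\]
where $\langle u, v\rangle_K := \int_Q (u(x)-u(y))(v(x)-v(y))K(x-y)\,dx\,dy$. Defining the measurable function
\[
q(x) := \frac{f(x,u_1(x)) - f(x,u_2(x))}{u_1(x) - u_2(x)} \quad\text{on } \{w \neq 0\},
\]
extended arbitrarily on $\{w=0\}$ (which is harmless since $w$ multiplies every occurrence of $q$), hypothesis \eqref{f2} yields $\lambda_k < q(x) < \lambda_{k+1}$ a.e.\ in $\Omega$, and the identity rewrites as $\langle w, \varphi\rangle_K = \int_\Omega q\, w\,\varphi\,dx$ for every $\varphi \in Z$.

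Next I would invoke the spectral decomposition of $-\mathcal{L}_K$ recalled in Proposition \ref{autovalori}: let $\{e_j\}_{j\geq 1}\subset Z$ be a system of eigenfunctions, orthonormal in $L^2(\Omega)$ and orthogonal with respect to $\langle\cdot,\cdot\rangle_K$, corresponding to the eigenvalues $\lambda_j$. Writing $P_k := \mathrm{span}\{e_1,\ldots,e_k\}$, decompose $w = w^- + w^+$ with $w^- \in P_k$ and $w^+ \in P_k^{\perp}$; both components lie in $Z$ and are orthogonal in $L^2$ and in $\langle\cdot,\cdot\rangle_K$. The Rayleigh--Ritz characterizations furnish
\[
\langle w^-, w^-\rangle_K \leq \lambda_k \|w^-\|_{L^2}^2 \quad\text{and}\quad \langle w^+, w^+\rangle_K \geq \lambda_{k+1}\|w^+\|_{L^2}^2.
\]

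The decisive step is to feed $\varphi = w^+ - w^-$ into the identity. Using the $K$-orthogonality of $w^\pm$, the cross terms on the left cancel and I obtain
\[
\langle w^+, w^+\rangle_K - \int_\Omega q\,(w^+)^2\,dx \;=\; \langle w^-, w^-\rangle_K - \int_\Omega q\,(w^-)^2\,dx.
\]
Since $q(x) < \lambda_{k+1}$ strictly a.e., the left-hand side is $\geq \lambda_{k+1}\|w^+\|_{L^2}^2 - \int q\,(w^+)^2\,dx \geq 0$, with strict inequality whenever $w^+\not\equiv 0$; symmetrically, since $q(x) > \lambda_k$ strictly a.e., the right-hand side is $\leq 0$, with strict inequality whenever $w^-\not\equiv 0$. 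Equality of the two sides therefore forces $w^+ = w^- = 0$, i.e.\ $w=0$, contradicting $u_1 \neq u_2$.

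The only nontrivial input is the spectral theory of $-\mathcal{L}_K$ (existence of the eigenbasis with the stated double orthogonality and the min-max characterization of the eigenvalues) underlying the splitting and the Rayleigh inequalities; I would treat this as a black box by citing Proposition \ref{autovalori}. I do not foresee any substantive additional obstacle, as the identification of the difference quotient $q$ with a potential enclosed between two consecutive eigenvalues reduces uniqueness to the linear spectral gap argument carried out above.
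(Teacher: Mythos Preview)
Your argument is correct and is essentially the same as the paper's: the paper packages the spectral-gap step as Lemma~\ref{lemmanuovo} (testing separately with the $H_k$- and $\mathbb{P}_{k+1}$-components of $w$ and subtracting), whereas you inline the same computation by testing directly with $\varphi=w^{+}-w^{-}$. The reduction to a linear problem with potential strictly between $\lambda_k$ and $\lambda_{k+1}$ and the use of the Rayleigh inequalities from Proposition~\ref{autovalori} are identical in both proofs.
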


The paper is organized as follows. In Section~\ref{sec setting} we introduce the functional setting we will work in and we recall some basic facts on the spectral theory of the operator $\mathcal L_K$. In Section~\ref{sec saddle} we prove
Theorem~\ref{Thsaddle} performing the classical Saddle Point
Theorem.

\section{The functional analytic setting and an eigenvalue problem}\label{sec setting}
At first, we recall some preliminary results on 
the functional space $Z$, introduced on page 2.
\begin{lem}\label{z}
Let $K:\mathbb{R}^n\setminus\{0\}\rightarrow (0,+\infty)$
satisfy assumptions \eqref{K1} and \eqref{K2}.
Then, the following assertions hold true:
\begin{itemize}
\item[$a)$] $Z$ is continuously embedded in $W^{s,2}_{0}(\Omega)$ (for a detailed description see \cite{valpal}) which is the closure of $C^{\infty}_{0}(\Omega)$ in the space $W^{s,2}(\Omega)$ of functions $u$ defined on $\Omega$ for which is well defined the so-called \emph{Gagliardo norm}
\begin{equation*}
\|u\|_{W^{s,2}(\Omega)}=
\Big(\int_\Omega \left|u(x)\right|^2dx+\int_{\Omega\times
\Omega}\frac{|u(x)-u(y)|^2}{|x-y|^{n+2s}}\,dx\,dy\Big)^{1/2}\,.
\end{equation*}
\item[$b)$] $Z$ is compactly embedded in $L^p(\Omega)$ for any $p\in[1,2^*)$, where the fractional critical Sobolev exponent is defined as
\begin{equation*}
2^*:=
\left\{\begin{array}{ll}
$$\displaystyle\frac{2n}{n-2s} & \mbox{if } n>2s,$$\\

$$+\infty & \mbox{if } n\leq 2s.$$
\end{array}\right.
\end{equation*}
\item[$c)$] $Z$ is a Hilbert space endowed with the following norm
\begin{equation}\label{normaZ}
\left\|v\right\|_{Z}=\Big(\int_Q |v(x)-v(y)|^2K(x-y)dx\,dy\Big)^{1/2}\,,
\end{equation}
which is equivalent to the usual one defined in \eqref{norma}.
\end{itemize}
\end{lem}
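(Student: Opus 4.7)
My plan is to treat the three assertions in the order they are stated, since each relies on the previous one.

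For assertion $(a)$, the key is assumption \eqref{K2}: since the kernel $K$ dominates $\theta|x|^{-(n+2s)}$ pointwise, and since $\Omega\times\Omega\subset Q$, I would estimate
\[
\int_{\Omega\times\Omega}\frac{|u(x)-u(y)|^2}{|x-y|^{n+2s}}\,dx\,dy\;\le\;\frac{1}{\theta}\int_{Q}|u(x)-u(y)|^2K(x-y)\,dx\,dy,
\]
and combine with the $L^2(\Omega)$ term already present in $\|\cdot\|_X$ to obtain $\|u\|_{W^{s,2}(\Omega)}\le C\|u\|_X$ on $X$. Since $Z$ is the closure of $C^\infty_0(\Omega)$ in $X$ and $C^\infty_0(\Omega)\subset W^{s,2}_0(\Omega)$, passing to the limit gives $Z\hookrightarrow W^{s,2}_0(\Omega)$ continuously.

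For assertion $(b)$, I would simply compose the continuous embedding from $(a)$ with the classical compact embedding $W^{s,2}_0(\Omega)\hookrightarrow\hookrightarrow L^p(\Omega)$ for $p\in[1,2^*)$, valid since $\Omega$ is bounded; the reference \cite{valpal} supplies this.

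For assertion $(c)$, the one nontrivial point is the equivalence of the two norms on $Z$, which amounts to a Poincaré-type inequality: for $u\in Z$,
\[
\int_\Omega|u(x)|^2\,dx\;\le\;C\int_Q|u(x)-u(y)|^2K(x-y)\,dx\,dy.
\]
Here I would use $(a)$ to pass to the Gagliardo seminorm and then invoke the standard fractional Poincaré inequality in $W^{s,2}_0(\Omega)$, which controls the $L^2$ norm by the Gagliardo seminorm on the bounded domain $\Omega$; the $K$-weighted bound above then follows via \eqref{K2}. Once equivalence is established, $(Z,\|\cdot\|_Z)$ inherits a Hilbert space structure from the naturally associated bilinear form
\[
(u,v)\mapsto\int_Q(u(x)-u(y))(v(x)-v(y))K(x-y)\,dx\,dy,
\]
and completeness follows from $Z$ being closed in $X$ by definition.

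I expect the main (though still routine) obstacle to be the Poincaré inequality in step $(c)$: it is not a formal consequence of the definitions and requires both $(a)$ and the fact that functions in $Z$ vanish outside the bounded set $\Omega$. Everything else is a short computation or a citation to \cite{valpal, svnew, sv2}.
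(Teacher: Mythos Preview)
Your proposal is correct and, for part $(a)$, essentially identical to the paper's argument. The differences lie in parts $(b)$ and $(c)$, where you invoke standard results as black boxes while the paper supplies self-contained proofs.

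For $(b)$, the paper does not cite the compact embedding $W^{s,2}_0(\Omega)\hookrightarrow\hookrightarrow L^p(\Omega)$ directly, because $\Omega$ is merely a bounded open set with no regularity assumed. Instead it chooses a regular $\Omega'\supset\Omega$, extends $u\in W^{s,2}_0(\Omega)$ by zero to $\widetilde u\in W^{s,2}_0(\Omega')$, observes this is an isometry, and then applies the compact embedding on the regular domain $\Omega'$. Your shortcut is fine provided the cited reference covers irregular $\Omega$; the paper's route makes this explicit.

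For $(c)$, rather than citing a fractional Poincar\'e inequality in $W^{s,2}_0(\Omega)$, the paper gives a one-line elementary proof that exploits the region $Q\setminus(\Omega\times\Omega)$: choosing $R$ with $\Omega\subset B_R$ and $|B_R\setminus\Omega|>0$, and using $u=0$ on $\mathcal{C}\Omega$,
\[
\int_Q\frac{|u(x)-u(y)|^2}{|x-y|^{n+2s}}\,dx\,dy
\;\ge\;\int_{B_R\setminus\Omega}\int_\Omega\frac{|u(y)|^2}{(2R)^{n+2s}}\,dy\,dx
\;=\;\frac{|B_R\setminus\Omega|}{(2R)^{n+2s}}\,\|u\|_{L^2(\Omega)}^2,
\]
and then combines with \eqref{K2}. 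This is slightly sharper than your route through the $\Omega\times\Omega$ Gagliardo seminorm, since it uses directly that the $Z$-seminorm integrates over all of $Q$; it also avoids any appeal to regularity of $\Omega$. Your approach is shorter to write but depends on an external Poincar\'e inequality whose standard proofs typically reduce to an argument of exactly this kind.
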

\begin{proof}
For part $a)$ we simply observe that by \eqref{K2} we get
\begin{equation}\label{theta}
\theta\int_Q\frac{\left|u(x)-u(y)\right|^2}{\left|x-y\right|^{n+2s}}\,dx\,dy\leq\int_Q\left|u(x)-u(y)\right|^2 K(x-y)\,dx\,dy
\end{equation}
and so
$$
\|u\|_{W^{s,2}(\Omega)}\leq c(\theta)\|u\|_X\,,
$$
with $c(\theta)=\max\{1, \theta^{-1/2}\}$.

Now, we prove part $b)$. Let $\Omega'$ be a regular, open subset of $\mathbb{R}^n$ such that $\Omega\subseteq\Omega'$. For any $u\in W^{s,2}_{0}(\Omega)$ we can define
	\[\widetilde{u}(x):=\left\{\begin{array}{ll}
$$u(x) & \mbox{if } x\in\Omega,$$\\

$$0 & \mbox{if } x\in\Omega'\setminus\Omega.$$
\end{array}\right.
\]
It is clear that $\widetilde{u}\in W^{s,2}_{0}(\Omega')$. Indeed, if $\left\{u_j\right\}_{j\in\mathbb{N}}$ is a sequence in $C^\infty_0(\Omega)$ which converges to $u$ in $W^{s,2}_{0}(\Omega)$ then $\left\{\widetilde{u}_j\right\}_{j\in\mathbb{N}}$ is a sequence in $C^\infty_0(\Omega')$ which converges to $\widetilde{u}$ in $W^{s,2}_{0}(\Omega')$.
Moreover, we also have
	\[\left\|\widetilde{u}\right\|_{W^{s,2}(\Omega')}=\left\|u\right\|_{W^{s,2}(\Omega)}.
\]
Thus, $W^{s,2}_{0}(\Omega')$ is isometric embedded in $W^{s,2}_{0}(\Omega)$.
The conclusion follows by remembering that $W^{s,2}_{0}(\Omega')$ is compactly embedded in $L^p(\Omega')$ with $1\leq p <2^*$.

For the assertion $c)$ we claim that there exists a constant $C>0$ such that
\begin{equation}\label{poincarè}
\left\|u\right\|_{L^2(\Omega)}\leq C\left(\int_Q \frac{\left|u(x)-u(y)\right|^2}{\left|x-y\right|^{n+2s}}\,dx\,dy\right)^{1/2}
\end{equation}
for any $u\in W^{s,2}_{0}(\Omega)$.
In fact, since $\Omega$ is bounded there is $R>0$ such that $\Omega\subseteq B_R$ and $\left|B_R\setminus\Omega\right|>0$. So, we get
\begin{equation*}
\begin{alignedat}2
&\int_Q\frac{\left|u(x)-u(y)\right|^2}{\left|x-y\right|^{n+2s}}\,dx\,dy\geq\int_{{\mathcal C}\Omega}\left(\int_\Omega\frac{\left|u(x)-u(y)\right|^2}{\left|x-y\right|^{n+2s}}\,dy\right)dx\\
&=\int_{{\mathcal C}\Omega}\left(\int_\Omega\frac{\left|u(y)\right|^2}{\left|x-y\right|^{n+2s}}dy\right)dx
\geq\int_{B_R\setminus\Omega}\left(\int_\Omega\frac{\left|u(y)\right|^2}{\left|2R\right|^{n+2s}}dy\right)dx=\frac{\left|B_R\setminus\Omega\right|}{(2R)^{n+2s}}\left\|u\right\|^{2}_{L^2(\Omega)}
\end{alignedat}
\end{equation*}
for any $u\in W^{s,2}_{0}(\Omega)$ (since $u=0$ in $\mathbb{R}^n\setminus\Omega$), which proves our claim. Finally, by combining \eqref{theta} and \eqref{poincarè} we conclude the proof.
\end{proof}
From now on, we take \eqref{normaZ} as norm on $Z$.
Now, we study some properties of eigenvalues and eigenfunctions of the non-local operator $-\mathcal L_K$ (for a more general and detailed study see \cite{sv5}).
\begin{prop}\label{autovalori}
Let $K:\mathbb{R}^n\setminus\{0\}\rightarrow (0,+\infty)$
satisfy assumptions \eqref{K1} and \eqref{K2}.
Then, there exists an orthogonal complete basis of eigenvectors $e_j$ $(j=1,2,\ldots)$ in $Z$ normalized in $L^2(\Omega)$, by the quadratic form $\left\|\cdot\right\|^{2}_{L^2(\Omega)}$.
The corrisponding eigenvalues $\lambda^{-1}_{j}$ verify \,$0<\lambda_1<\lambda_2\leq\lambda_3\leq\ldots$
and \,$\displaystyle\sup_{j\in\mathbb{N}^*}\lambda_j=+\infty$.
Moreover, for any $k\in\mathbb{N}^*$ it follows that
\begin{equation}\label{Hk}
\int_Q(u(x)-u(y))^2 K(x-y)\,dxdy\leq\lambda_k\left\|u\right\|^{2}_{L^2(\Omega)}\quad\mbox{for any}\,\,u\in\mbox{span}(e_1,\ldots e_k),
\end{equation}
\begin{equation}\label{Pk}
\int_Q(u(x)-u(y))^2 K(x-y)\,dxdy\geq\lambda_k\left\|u\right\|^{2}_{L^2(\Omega)}\quad\mbox{for any}\,\,u\in\mathbb{P}_k,
\end{equation}
where $\mathbb{P}_{k}:=\left\{u\in Z:\,\left\langle u,e_j\right\rangle_{Z}=0\quad\mbox{for any}\,\,\, j=1,\ldots,k-1\right\}$ $(\mathbb{P}_{1}:=Z)$.
\end{prop}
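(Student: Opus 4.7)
The plan is to reduce the statement to the spectral theorem for compact, self-adjoint, positive operators on $L^2(\Omega)$, applied to the inverse $T=(-\mathcal L_K)^{-1}$. By Lemma~\ref{z}$(c)$, the bilinear form $\langle\cdot,\cdot\rangle_Z$ is an inner product making $Z$ a Hilbert space, and the continuous embedding $Z\hookrightarrow L^2(\Omega)$ ensures that, for every $f\in L^2(\Omega)$, the functional $\varphi\mapsto\int_\Omega f\varphi\,dx$ is continuous on $Z$. The Riesz representation theorem then furnishes a unique $Tf\in Z$ with $\langle Tf,\varphi\rangle_Z=\int_\Omega f\varphi\,dx$ for every $\varphi\in Z$. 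Composing $T:L^2(\Omega)\to Z$ with the compact embedding from Lemma~\ref{z}$(b)$ yields a compact operator on $L^2(\Omega)$; symmetry of $\langle\cdot,\cdot\rangle_Z$ gives self-adjointness, and $\langle Tf,f\rangle_{L^2}=\|Tf\|_Z^2\geq 0$, with equality forcing $Tf=0$, gives strict positivity.

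The spectral theorem then produces an $L^2$-orthonormal basis $\{e_j\}$ of eigenfunctions of $T$ whose positive eigenvalues $\mu_j$ accumulate only at $0$. Setting $\lambda_j:=\mu_j^{-1}$ arranges these in non-decreasing order with $\lambda_j\to+\infty$, and the defining relation for $T$ reads $\langle e_j,\varphi\rangle_Z=\lambda_j\langle e_j,\varphi\rangle_{L^2}$ for every $\varphi\in Z$, which is the weak form of $-\mathcal L_K e_j=\lambda_j e_j$. Choosing $\varphi=e_i$ gives simultaneously the $Z$-orthogonality $\langle e_j,e_i\rangle_Z=\lambda_j\delta_{ij}$ and the identity $\|e_j\|_Z^2=\lambda_j$. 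Completeness in $Z$ is then immediate: any $u\in Z$ with $\langle u,e_j\rangle_Z=0$ for every $j$ satisfies $\langle u,e_j\rangle_{L^2}=0$ for every $j$, hence $u=0$. The strict inequality $\lambda_1<\lambda_2$ is the only non-routine point; I would obtain it by a Perron-type step, replacing any Rayleigh-quotient minimizer with its absolute value to produce a non-negative first eigenfunction, whose uniqueness up to sign forces simplicity of $\lambda_1$.

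The bounds \eqref{Hk} and \eqref{Pk} reduce to a Parseval computation in the eigenbasis. For $u=\sum_{j=1}^k c_j e_j$ one has $\|u\|_{L^2}^2=\sum c_j^2$ and $\|u\|_Z^2=\sum c_j^2\lambda_j$, and combining with $\lambda_j\leq\lambda_k$ yields \eqref{Hk}. For $u\in\mathbb P_k$, the $Z$-orthogonality conditions are equivalent to $\langle u,e_j\rangle_{L^2}=0$ for $j<k$, so only the tail $j\geq k$ survives in the expansion and the same computation gives $\|u\|_Z^2=\sum_{j\geq k}c_j^2\lambda_j\geq\lambda_k\|u\|_{L^2}^2$, which is \eqref{Pk}. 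The main obstacle is establishing that $T$ is well defined, compact, and positive, but all three properties are direct consequences of Lemma~\ref{z}, so the remainder of the proof is essentially bookkeeping.
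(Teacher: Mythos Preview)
Your proposal is correct and follows essentially the same route as the paper: the paper's own proof consists of a one-line appeal to ``the general theory of functional analysis and the compact embedding of $Z$ in $L^2(\Omega)$'' together with a citation of \cite[Proposition~9]{sv5} for the simplicity of $\lambda_1$, and your argument is precisely the standard spelling-out of that appeal via the inverse operator $T$ and the spectral theorem for compact self-adjoint operators. The only place where you supply more than the paper is the Perron-type sketch for $\lambda_1<\lambda_2$, which is exactly the content of the cited result in \cite{sv5}; your outline (pass to $|u|$, observe that a sign-changing first eigenfunction would strictly lower the Rayleigh quotient because $(|u(x)|-|u(y)|)^2<(u(x)-u(y))^2$ whenever $u(x)u(y)<0$) is the correct mechanism.
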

\begin{proof}
The proof follows by the general theory of functional analysis and by the compact embedding of $Z$ in $L^2(\Omega)$, proved in Lemma \ref{z}. Moreover, the fact that the eigenvalue $\lambda_1$ is simple is proved in \cite[Proposition 9]{sv5}.
\end{proof}

\section{Proof of Theorem~\ref{Thsaddle}}\label{sec saddle}

For the proof of Theorem~\ref{Thsaddle}, we observe that
problem~\eqref{wf} has a variational structure, indeed it is the Euler-Lagrange equation of the functional $\mathcal J:Z\to \mathbb{R}$ defined as follows
$$\mathcal J(u)=\frac 1 2 \int_Q|u(x)-u(y)|^2 K(x-y)\,dx\,dy-\int_\Omega F(x, u(x))dx\,,$$
where $F$ is the primitive of $f$ with respect to the second variable, that is
	\[F(x,t)=\int^{t}_{0}f(x,\tau)d\tau.
\]
\noindent Moreover, note that the functional $\mathcal J$ is Fr\'echet differentiable in $u\in Z$ and for any $\varphi\in Z$
$$\begin{aligned}
\mathcal J'(u)(\varphi) & = \int_Q \big(u(x)-u(y)\big)\big(\varphi(x)-\varphi(y)\big)K(x-y)\,dx\,dy\\
& \qquad \qquad \qquad \qquad \qquad \qquad -\int_\Omega f(x, u(x))\varphi(x)\,dx\,.
\end{aligned}$$
Thus, critical points of $\mathcal J$ are solutions to problem~\eqref{wf}.
In order to find these critical points, we will divide the proof in two cases. At first, when $\overline{\alpha}(x)<\lambda_{1}$ the existence of the solution of problem \eqref{wf} follows from the Weierstrass Theorem (i.e. by direct minimization). When  $\lambda_{k}<\underline{\alpha}(x)\leq\overline{\alpha}(x)<\lambda_{k+1}$ for some $k\in\mathbb{N}^*$, we will make use of the Saddle Point Theorem (see \cite{rabinowitz}).
For this, we have to check that the functional $\mathcal J$ has a particular geometric structure
(as stated, e.g., in conditions~($I_{3}$) and ($I_{4}$) of \cite[Theorem~4.6]{rabinowitz})
and that it satisfies the Palais--Smale compactness condition (see, for instance, \cite[page~3]{rabinowitz}).

\subsection{The case $\overline{\alpha}(x)<\lambda_{1}$}
In this subsection, in order to apply the Weierstrass Theorem we first verify that the functional $\mathcal J$ satisfy the following geometric feature.
\begin{prop}\label{prop coercività}
Let $K$ and $f$ be two functions satisfying assumptions \eqref{K1}--\eqref{f1}. Moreover, let $\overline{\alpha}(x)<\lambda_{1}$ a.e. in $\Omega$. Then, the functional $\mathcal J$ verifies
\begin{equation}\label{coercività}
\liminf_{\left\|u\right\|_{Z}\rightarrow +\infty}\frac{\mathcal J(u)}{\left\|u\right\|^{2}_{Z}}>0.
\end{equation}
\end{prop}
\begin{proof}
It is enough to show that if $\left\{u_{j}\right\}_{j\in\mathbb{N}}$ is a sequence in $Z$ such that $\left\|u_{j}\right\|_{Z}\rightarrow +\infty$, then
\begin{equation}\label{coesempl}	\limsup_{j\rightarrow+\infty}\int_\Omega\frac{F(x,u_j(x))}{\left\|u_j\right\|^{2}_{Z}}dx<\frac{1}{2}.
\end{equation}

By Lemma \ref{z}, up to a subsequence, there exists $u_0\in Z$ such that $u_{j}/\left\|u_{j}\right\|_{Z}$ converges to $u_0$ strongly in $L^2(\Omega)$ and a.e. in $\Omega$, as well as weakly in $Z$. So, $\left\|u_0\right\|_{Z}\leq 1$.
Now, by \eqref{f1} we observe that
\begin{equation*}
\frac{\left|F(x,u_{j})\right|}{\left\|u_{j}\right\|^{2}_{Z}}\leq \frac{a(x)\left|u_{j}\right|+b\frac{\displaystyle\left|u_{j}\right|^{2}}{\displaystyle2}}{\left\|u_{j}\right\|^{2}_{Z}}\,,
\end{equation*}
where the sequence on the right-hand side converges in $L^1(\Omega)$.
Moreover, we claim that
\begin{equation}\label{claimnew}
\limsup_{j\rightarrow +\infty}\frac{F(x,u_{j}(x))}{\left\|u_{j}\right\|^{2}_{Z}}\leq\frac{\overline{\alpha}(x)}{2}\left|u_0(x)\right|^2
\end{equation}
which follows by previous formula when $x\in\Omega$ such that $u_0(x)=0$. While, for $x$ such that $u_0(x)\neq 0$, \eqref{claimnew} follows from the fact that in this case $\left|u_j(x)\right|^2\rightarrow+\infty$ and so for $j$ sufficiently large we get
\[\frac{F(x,u_j(x))}{\left\|u_j\right\|^{2}_{Z}}=\frac{F(x,u_j(x))}{\left|u_{j}(x)\right|^2}\frac{\left|u_{j}(x)\right|^2}{\left\|u_j\right\|^{2}_{Z}},
\]
and also by \eqref{f1} and \eqref{finf} we have
\begin{equation*}
\limsup_{\left|t\right|\rightarrow \infty}\frac{F(x,t)}{t^{2}}\leq\frac{\overline{\alpha}(x)}{2}.
\end{equation*}

Thus, by the generalized Fatou Lemma, \eqref{Pk} and \eqref{claimnew} it follows that
\begin{equation*}
\limsup_{j\rightarrow +\infty}\int_{\Omega}\frac{F(x,u_{j}(x))}{\left\|u_{j}\right\|^{2}_{Z}}dx\leq\int_{\Omega}\frac{\overline{\alpha}(x)}{2}\left|u_0(x)\right|^2dx
\leq\frac{\lambda_1}{2}\int_\Omega\left|u_0(x)\right|^2dx\leq\frac{\left\|u_0\right\|^2_Z}{2}\leq\frac{1}{2}.
\end{equation*}
The second of these last inequalities is strict if $u_0\neq 0$, while the last one is strict if $u_0= 0$.
\end{proof}

\begin{proof}[\bf Proof of Theorem \ref{Thsaddle}, when $\overline{\alpha}(x)<\lambda_{1}$]
As is well known, the map $u\mapsto\left\|u\right\|^{2}_{Z}$ is lower semicontinuous in the weak topology of $Z$, while the map $u\mapsto\int_{\Omega}F(x,u)$ is continuous in the weak topology of $Z$, since \eqref{f1} implies that 
	\[\left|F(x,t)\right|\leq a(x)\left|t\right|+b\frac{\left|t\right|^2}{2}.
\]
So, the functional $\mathcal J$ is lower semicontinuous and by using also \eqref{coercività} to obtain coerciveness we can apply the Weierstrass Theorem in order to find a minimum of $\mathcal J$ on $Z$, which is clearly a solution of problem \eqref{wf}.
\end{proof}

\subsection{The case $\lambda_{k}<\underline{\alpha}(x)\leq\overline{\alpha}(x)<\lambda_{k+1}$}

At first, we recall that, in what follows, $e_{k}$ will be the $k$-th eigenfunction corresponding to the eigenvalue $\lambda_{k}$ of $-\mathcal L_{K}$ for any $k\in\mathbb{N}^*$, and we set
	\[\mathbb{P}_{k+1}:=\left\{u\in Z:\,\,\left\langle u,e_{j}\right\rangle_{Z}=0\quad\mbox{for any}\,\, j=1,\ldots,k \right\}
\]
as defined in Proposition \ref{autovalori}, while $H_{k}:=\mbox{span}\left\{e_{1},\ldots,e_{k}\right\}$  for any $k\in\mathbb{N}^*$. It is immediate to observe that $\mathbb{P}_{k+1}=H^{\bot}_{k}$ with respect to the scalar product in $Z$ and $Z=H_k\oplus\mathbb{P}_{k+1}$. 

Now, we prove that the functional $\mathcal J$ has the geometric features required by the Saddle Point Theorem.
\begin{prop}\label{prop Hk}
Let $K$ and $f$ be two functions satisfying assumptions \eqref{K1}--\eqref{f1}. Moreover, assume there exists $k\in\mathbb{N}^*$ such that  $\lambda_{k}<\underline{\alpha}(x)\leq\overline{\alpha}(x)<\lambda_{k+1}$ a.e. in $\Omega$. Then, the functional $\mathcal J$ verifies
\begin{equation}
\limsup_{u\in H_{k},\, \left\|u\right\|_{Z}\rightarrow +\infty}\frac{\mathcal J(u)}{\left\|u\right\|^{2}_{Z}}<0. \label{limsup}
\end{equation}
\end{prop}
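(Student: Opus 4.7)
The plan is to proceed in parallel with the proof of the previous proposition on coerciveness, swapping the roles of $\overline{\alpha}$ and \eqref{Pk} for $\underline{\alpha}$ and \eqref{Hk}, and exploiting crucially that $\dim H_k<\infty$. It suffices to show that for every sequence $\{u_j\}\subset H_k$ with $\|u_j\|_Z\to+\infty$ one has
\begin{equation*}
\liminf_{j\to\infty}\int_\Omega\frac{F(x,u_j(x))}{\|u_j\|_Z^2}\,dx \;>\;\frac{1}{2},
\end{equation*}
since then $\mathcal J(u_j)/\|u_j\|_Z^2=1/2-\int_\Omega F(x,u_j)/\|u_j\|_Z^2\,dx$ is eventually bounded above by a negative constant, and a routine subsequence/contradiction argument upgrades this to \eqref{limsup}.

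For the reduction step, set $v_j:=u_j/\|u_j\|_Z$. Since $H_k$ is finite-dimensional, the normalized sequence has (up to subsequence) a limit $u_0\in H_k$ with $\|u_0\|_Z=1$, and $v_j\to u_0$ strongly in $Z$, in $L^2(\Omega)$ (by Lemma \ref{z}), and a.e.\ in $\Omega$. Estimate \eqref{Hk} applied to $u_0\in H_k$ gives $\|u_0\|_{L^2(\Omega)}^2\geq 1/\lambda_k>0$, so $u_0$ is nontrivial on a set of positive measure. Next I would establish the pointwise bound
\begin{equation*}
\liminf_{j\to\infty}\frac{F(x,u_j(x))}{\|u_j\|_Z^2}\;\geq\;\frac{\underline{\alpha}(x)}{2}|u_0(x)|^2\quad\text{a.e.\ in }\Omega.
\end{equation*}
On $\{u_0\neq 0\}$ we have $|u_j(x)|\to+\infty$; the factorisation $F(x,u_j)/\|u_j\|_Z^2=[F(x,u_j)/u_j^2]\,v_j(x)^2$ combined with the standard consequence of \eqref{f1} and \eqref{finf} that $\liminf_{|t|\to\infty}F(x,t)/t^2\geq\underline{\alpha}(x)/2$ (obtained by integrating the quotient $f(x,\tau)/\tau$) delivers the claim. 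On $\{u_0=0\}$ the target is zero, and this follows from the absolute bound $|F(x,u_j)/\|u_j\|_Z^2|\leq a(x)|v_j|/\|u_j\|_Z+b|v_j|^2/2$ together with $v_j(x)\to 0$ and $\|u_j\|_Z\to\infty$.

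To pass the liminf inside the integral, I apply the generalized Fatou lemma with the minorant sequence $g_j(x):=-a(x)|v_j(x)|/\|u_j\|_Z-b|v_j(x)|^2/2$, which converges in $L^1(\Omega)$ to $-b|u_0|^2/2$ thanks to $a\in L^2(\Omega)$, $v_j\to u_0$ in $L^2(\Omega)$, and $\|u_j\|_Z\to\infty$. Combining this with the strict inequality $\underline{\alpha}(x)>\lambda_k$ a.e.\ in $\Omega$ and with \eqref{Hk} yields
\begin{equation*}
\liminf_{j\to\infty}\int_\Omega\frac{F(x,u_j)}{\|u_j\|_Z^2}\,dx\;\geq\;\int_\Omega\frac{\underline{\alpha}(x)}{2}|u_0|^2\,dx\;>\;\frac{\lambda_k}{2}\|u_0\|_{L^2(\Omega)}^2\;\geq\;\frac{\|u_0\|_Z^2}{2}\;=\;\frac{1}{2},
\end{equation*}
the strict middle step being ensured by $u_0\not\equiv 0$.

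The main obstacle I anticipate is handling the null set $\{u_0=0\}$ while still producing an $L^1$-convergent minorant: assumption \eqref{f1} only controls $|F|$, not $F$ from below, so one has to use the absolute estimate both to get the pointwise lower bound where $u_0$ vanishes and to construct the $g_j$'s, relying on the $L^2$-convergence of $v_j$ which is available for free because $\dim H_k<\infty$. Everything else is formally dual to the argument already carried out for the coerciveness proposition.
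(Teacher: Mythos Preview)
Your proposal is correct and follows essentially the same approach as the paper's own proof: exploit finite-dimensionality of $H_k$ to get strong convergence of the normalized sequence to some $u_0$ with $\|u_0\|_Z=1$, establish the pointwise lower bound $\liminf F(x,u_j)/\|u_j\|_Z^2\geq\tfrac{\underline{\alpha}(x)}{2}|u_0|^2$ exactly as in the earlier proposition (swapping $\overline{\alpha}$ for $\underline{\alpha}$), and then combine Fatou with $\underline{\alpha}>\lambda_k$ and \eqref{Hk}. Your write-up is in fact more explicit than the paper's in two places---you spell out the $L^1$-convergent minorant needed for the generalized Fatou lemma, and you observe directly from \eqref{Hk} that $\|u_0\|_{L^2}^2\geq 1/\lambda_k>0$ so the strict inequality step is justified---but these are elaborations of the same argument, not a different route.
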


\begin{proof}
Let $\left\{u_{j}\right\}_{j\in\mathbb{N}}$ be a sequence in $H_{k}$ such that  $\left\|u_{j}\right\|_{Z}\rightarrow +\infty$. Since $H_k$ is finite dimensional, there exists $u_0\in H_k$ such that $u_{j}/\left\|u_{j}\right\|_{Z}$ converges to $u_0$ strongly in $Z$ and also $\left\|u_0\right\|_{Z}=1$.

Now, by proceeding as in the proof of claim \eqref{claimnew}, it follows that
\begin{equation*}
\liminf_{j\rightarrow +\infty}\frac{F(x,u_{j}(x))}{\left\|u_{j}\right\|^{2}_{Z}}\geq\frac{\underline{\alpha}(x)}{2}\left|u_0(x)\right|^2,
\end{equation*}
a.e. in $\Omega$.
So, by using also the Fatou Lemma and the fact that $\underline{\alpha}(x)>\lambda_{k}$, we have
\begin{equation*}
\limsup_{j\rightarrow +\infty}\frac{\mathcal J(u_{j})}{\left\|u_{j}\right\|^{2}_{Z}}\leq \frac{1}{2}-\int_\Omega\frac{\underline{\alpha}(x)}{2}\left|u_0(x)\right|^2dx
<\frac{1}{2}-\frac{\lambda_{k}}{2}\int_\Omega \left|u_0(x)\right|^2dx.
\end{equation*}
By the last inequality, \eqref{Hk} and the fact that $\left\|u_0\right\|_{Z}=1$, we get \eqref{limsup}.
\end{proof}
Also, Proposition \ref{prop Hk} has the following counterpart.

\begin{prop}\label{prop Hk ortogonale}
Let $K$ and $f$ be two functions satisfying assumptions \eqref{K1}--\eqref{f1}. Moreover, assume there exists $k\in\mathbb{N}^*$ such that  $\lambda_{k}<\underline{\alpha}(x)\leq\overline{\alpha}(x)<\lambda_{k+1}$ a.e. in $\Omega$. Then, the functional $\mathcal J$ verifies
\begin{equation}
\liminf_{u\in\mathbb{P}_{k+1},\,\left\|u\right\|_{Z}\rightarrow +\infty}\frac{\mathcal J(u)}{\left\|u\right\|^{2}_{Z}}>0. \label{coercività Hk ortogonale}
\end{equation}
\end{prop}
\begin{proof}
The proof is similar to the proof of Proposition \ref{prop coercività}. In this case we have $\overline{\alpha}(x)<\lambda_{k+1}$, for some $k\in\mathbb{N}^*$, instead of $\overline{\alpha}(x)<\lambda_1$.
\end{proof}

In order to prove the boundedness of a Palais-Smale sequence, we first introduce the following lemma.
\begin{lem}\label{lemmanuovo}
Let $K$ be a function satisfying \eqref{K1} and \eqref{K2}. Moreover, assume there exist $k\in\mathbb{N}^*$ and a measurable function $m$ on $\Omega$ such that $\lambda_k<m(x)<\lambda_{k+1}$ for a.e. $x\in\Omega$. If $u_0\in Z$ satisfies
\begin{equation}\label{problema m}
\left\langle u_0,\varphi\right\rangle_Z-\int_\Omega m(x)u_0(x)\varphi(x)dx=0\quad\mbox{for any}\,\,\varphi\in Z,
\end{equation}
then $u_0= 0$.
\end{lem}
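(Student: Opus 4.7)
My plan is to use the spectral decomposition $Z = H_k \oplus \mathbb{P}_{k+1}$ together with the two-sided pinching $\lambda_k < m(x) < \lambda_{k+1}$ to force both spectral components of $u_0$ to vanish. Write $u_0 = v + w$ with $v \in H_k$ and $w \in \mathbb{P}_{k+1}$. Since $H_k \perp \mathbb{P}_{k+1}$ with respect to $\langle\cdot,\cdot\rangle_Z$, testing \eqref{problema m} separately with $\varphi = v$ and with $\varphi = w$ kills one of the two pieces on the left in each case and yields
\begin{equation*}
\|v\|_Z^2 = \int_\Omega m(x)(v+w)v\,dx, \qquad \|w\|_Z^2 = \int_\Omega m(x)(v+w)w\,dx.
\end{equation*}
Subtracting the first identity from the second cancels the cross term $\int_\Omega m\,vw\,dx$ and produces
\begin{equation*}
\|w\|_Z^2 - \int_\Omega m(x)\,w(x)^2\,dx \;=\; \|v\|_Z^2 - \int_\Omega m(x)\,v(x)^2\,dx.
\end{equation*}

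The next step is to show that the left-hand side is nonnegative while the right-hand side is nonpositive. By \eqref{Pk} applied to $w \in \mathbb{P}_{k+1}$, $\|w\|_Z^2 \ge \lambda_{k+1}\|w\|_{L^2(\Omega)}^2$, so the left-hand side dominates $\int_\Omega (\lambda_{k+1} - m(x))\,w^2\,dx \ge 0$; since $m(x) < \lambda_{k+1}$ strictly a.e., this quantity vanishes only when $w = 0$ a.e. Symmetrically, \eqref{Hk} applied to $v \in H_k$ gives $\|v\|_Z^2 \le \lambda_k\|v\|_{L^2(\Omega)}^2$, so the right-hand side is bounded above by $\int_\Omega (\lambda_k - m(x))\,v^2\,dx \le 0$, with equality only when $v = 0$ a.e. Since the two sides of the identity agree and have opposite signs, both must vanish, which forces $v \equiv 0$ and $w \equiv 0$; together with Lemma~\ref{z}(c) this gives $u_0 = 0$ in $Z$.

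There is no real technical obstacle; the crucial design choice is to test with $v$ and $w$ separately rather than with $\varphi = u_0$. The direct test $\varphi = u_0$ would give only $\|u_0\|_Z^2 = \int_\Omega m\,u_0^2\,dx$, in which the cross term $\int_\Omega m\,v\,w\,dx$ (nonzero because $m$ is not constant, so $v$ and $w$ need not be $m$-orthogonal in $L^2$) obstructs any componentwise comparison against \eqref{Hk} and \eqref{Pk}. Splitting the test kills exactly that cross term and decouples the contributions of the two subspaces, after which the strict pinching on $m$ does the rest.
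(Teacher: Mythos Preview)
Your proof is correct and follows essentially the same approach as the paper's: both decompose $u_0$ along $H_k \oplus \mathbb{P}_{k+1}$, test \eqref{problema m} with each component to isolate and cancel the cross term $\int_\Omega m\,vw\,dx$, and then use the strict pinching $\lambda_k < m < \lambda_{k+1}$ together with \eqref{Hk} and \eqref{Pk} to force both pieces to vanish. The only difference is cosmetic---the paper frames the final step as a contradiction, whereas you argue directly via the sign analysis of the two sides.
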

\begin{proof}
We can write $u_0=u_1+u_2$, where $u_1 \in H_k$ and $u_2\in \mathbb{P}_{k+1}$. By \eqref{problema m} we obtain
\begin{equation*}
\begin{alignedat}2
&\left\|u_1\right\|^2_Z=\int_\Omega m(x)(\left|u_1(x)\right|^2+u_2(x)u_1(x))dx\geq\int_\Omega (\lambda_k \left|u_1(x)\right|^2+m(x)u_2(x)u_1(x))dx,\\
&\left\|u_2\right\|^2_Z=\int_\Omega m(x)(u_1(x)u_2(x)+\left|u_2(x)\right|^2)dx\leq\int_\Omega (m(x)u_2(x)u_1(x)+\lambda_{k+1}\left|u_2(x)\right|^2)dx.
\end{alignedat}
\end{equation*}
If $u_0\neq 0$, then at least one of the above inequalities is strict and so, by using also \eqref{Hk} and \eqref{Pk}, it follows that
	\[\left\|u_1\right\|^{2}_{Z}-\left\|u_2\right\|^{2}_{Z}>\int_\Omega (\lambda_k \left|u_1(x)\right|^2-\lambda_{k+1}\left|u_2(x)\right|^2)dx\geq\left\|u_1\right\|^{2}_{Z}-\left\|u_2\right\|^{2}_{Z}
\]
which is a contradiction.
\end{proof}

\begin{prop}\label{prop palais}
Let $K$ and $f$ be two functions satisfying assumptions \eqref{K1}--\eqref{f1}. Moreover, assume there exists $k\in\mathbb{N}^*$ such that $\lambda_{k}<\underline{\alpha}(x)\leq\overline{\alpha}(x)<\lambda_{k+1}$ a.e. in $\Omega$. Let $\left\{u_{j}\right\}_{j\in\mathbb{N}}$ be a sequence in $Z$ such that $\left\{\mathcal J'(u_{j})\right\}_{j\in\mathbb{N}}$ is bounded.
Then, $\left\{u_{j}\right\}_{j\in\mathbb{N}}$ is bounded in $Z$.
\end{prop}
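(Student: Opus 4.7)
The plan is to argue by contradiction via Lemma~\ref{lemmanuovo}. Assume, along a subsequence, $\|u_j\|_Z\to+\infty$ and set $v_j:=u_j/\|u_j\|_Z$, so that $\|v_j\|_Z=1$. By Lemma~\ref{z}$(b)$ and the reflexivity of $Z$, up to a further subsequence $v_j\rightharpoonup v_0$ weakly in $Z$ and $v_j\to v_0$ both in $L^2(\Omega)$ and a.e.\ in $\Omega$. By the growth hypothesis~\eqref{f1}, the rescaled nonlinearity
\[g_j(x):=\frac{f(x,u_j(x))}{\|u_j\|_Z}\quad\text{satisfies}\quad|g_j(x)|\le\frac{a(x)}{\|u_j\|_Z}+b|v_j(x)|,\]
so $\{g_j\}$ is bounded in $L^2(\Omega)$ and, extracting once more, $g_j\rightharpoonup g$ weakly in $L^2(\Omega)$.

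Dividing the Euler--Lagrange expression for $\mathcal J'(u_j)$ by $\|u_j\|_Z$ and exploiting that $\{\mathcal J'(u_j)\}$ is bounded in $Z^*$, I pass to the limit to obtain
\[\langle v_0,\varphi\rangle_Z=\int_\Omega g(x)\,\varphi(x)\,dx\qquad\text{for every }\varphi\in Z.\]
The goal is to represent $g=m\,v_0$ with a measurable $m$ obeying $\lambda_k<m(x)<\lambda_{k+1}$ a.e., so that Lemma~\ref{lemmanuovo} forces $v_0\equiv 0$. On $\{v_0=0\}$, combining the pointwise bound above with $v_j\to 0$ in $L^2(\{v_0=0\})$ and $a/\|u_j\|_Z\to 0$ in $L^2(\Omega)$ yields $g_j\to 0$ strongly in $L^2(\{v_0=0\})$, hence $g\equiv 0$ there. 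On $\{v_0\ne 0\}$, one has $|u_j(x)|=\|u_j\|_Z|v_j(x)|\to+\infty$ a.e., and the factorization $g_j=(f(x,u_j)/u_j)\,v_j$ combined with~\eqref{finf} and an Egoroff/weak-limit argument shows that $g=m\,v_0$ with $\underline\alpha(x)\le m(x)\le\overline\alpha(x)$ a.e.\ on this set, so $\lambda_k<m(x)<\lambda_{k+1}$. Extending $m$ to $\{v_0=0\}$ by any constant in $(\lambda_k,\lambda_{k+1})$ preserves the identity $\int_\Omega g\,\varphi\,dx=\int_\Omega m\,v_0\,\varphi\,dx$, and Lemma~\ref{lemmanuovo} then produces $v_0\equiv 0$.

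To close, I test the normalized identity against $\varphi=v_j$: using $|\mathcal J'(u_j)(v_j)|\le C$ gives
\[1=\|v_j\|_Z^2=\int_\Omega g_j(x)\,v_j(x)\,dx+o(1).\]
Weak $L^2$-convergence of $g_j$ paired with strong $L^2$-convergence of $v_j$ yields $\int_\Omega g_j v_j\,dx\to\int_\Omega g\,v_0\,dx=0$ (since $v_0=0$), contradicting $1=1+o(1)$. The main obstacle is the pointwise identification of the weak limit $g$ as $m\,v_0$ with $\lambda_k<m<\lambda_{k+1}$ a.e.: translating the asymptotic inequalities in~\eqref{finf} into weak-$L^2$ information demands a careful Egoroff/Vitali-type argument, carried out separately on $\{v_0=0\}$ and on $\{|v_0|>0\}$, and is the technical heart of the proof.
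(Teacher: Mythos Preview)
Your proposal is correct and follows essentially the same route as the paper: argue by contradiction, normalize $v_j=u_j/\|u_j\|_Z$, extract weak limits $v_0$ and $g$ of $v_j$ and $f(\cdot,u_j)/\|u_j\|_Z$, identify $g=m\,v_0$ with $\lambda_k<m<\lambda_{k+1}$ so that Lemma~\ref{lemmanuovo} forces $v_0=0$, and then reach a contradiction by testing the rescaled Euler--Lagrange identity with $\varphi=v_j$. The only minor deviation is that on $\{v_0=0\}$ you obtain $g=0$ via the strong $L^2$-convergence $|g_j|\le a/\|u_j\|_Z+b|v_j|\to 0$, whereas the paper subsumes this case into a single pointwise estimate $\liminf_j g_j\le g\le\limsup_j g_j$; both lead to the same conclusion.
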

\begin{proof}

Step I) We argue by contradiction and suppose that $\left\{u_{j}\right\}_{j\in\mathbb{N}}$ is unbounded. By Lemma \ref{z}, up to a subsequence, there exists $u_0\in Z$ such that $u_j/\left\|u_{j}\right\|_{Z}$ converges to $u_0$ strongly in $L^2(\Omega)$ and a.e. in $\Omega$, as well as weakly in $Z$.

By our assumption on $\left\{\mathcal J'(u_{j})\right\}_{j\in\mathbb{N}}$ there exists a costant $c>0$ such that
\begin{equation}\label{limitatezza}
\begin{alignedat}2
&\frac{\left|\mathcal J'(u_j)(\varphi)\right|}{\left\|u_j\right\|_Z}  = \left|\left\langle \frac{u_j}{\left\|u_j\right\|_Z},\varphi\right\rangle_Z-\int_\Omega \frac{f(x, u_j(x))}{\left\|u_j\right\|_Z}\varphi(x)\,dx\right|\leq \frac{c\left\|\varphi\right\|_Z}{\left\|u_j\right\|_Z}
\end{alignedat}
\end{equation}
for any $\varphi\in Z$ and $j\in\mathbb{N}$.

By \eqref{f1} we get
\begin{equation*}
\frac{\left|f(x,u_{j})\right|}{\left\|u_{j}\right\|_{Z}}\leq \frac{a(x)}{\left\|u_{j}\right\|_{Z}}+b\frac{\left|u_{j}\right|}{\left\|u_{j}\right\|_{Z}},
\end{equation*}
where the sequence on the right-hand side is bounded in $L^2(\Omega)$.
So, there exists $\beta\in L^{2}(\Omega)$ such that, up to a subsequence,
$f(x,u_{j})/\left\|u_{j}\right\|_{Z}$ converges weakly to $\beta$ in $L^{2}(\Omega)$.

Now, we claim that
\begin{equation}\label{claim}
\begin{alignedat}2
&\beta(x)=m(x)u_0(x)\,\,\mbox{with}\,\, m\,\,\mbox{measurable and such that}\,\, \underline{\alpha}(x)\leq m(x)\leq\overline{\alpha}(x)\\
&\mbox{a.e. in}\,\, \Omega.
\end{alignedat}
\end{equation}
As is well known
\begin{equation*}
\liminf_{j\rightarrow+\infty}\frac{f(x,u_j(x))}{\left\|u_j\right\|_{Z}}\leq \beta(x)\leq\limsup_{j\rightarrow+\infty}\frac{f(x,u_j(x))}{\left\|u_j\right\|_{Z}}\quad\mbox{a.e. in}\,\,\Omega.
\end{equation*}
Moreover, if $x\in\Omega$ such that $u_0(x)\neq 0$, then for $j$ sufficiently large
\begin{equation*}
\frac{f(x,u_j(x))}{\left\|u_j\right\|_Z}=\frac{f(x,u_j(x))}{u_j(x)}\frac{u_j(x)}{\left\|u_j\right\|_Z}.
\end{equation*}
So, if $u_0(x)\geq 0$ we get $\underline{\alpha}(x)u_0(x)\leq \beta(x)\leq\overline{\alpha}(x)u_0(x)$, while if $u_0(x)<0$ the reversed inequalities hold true. This establishes \eqref{claim}, because when $x\in\Omega$ such that $u_0(x)=0$, we can set $m(x)=\displaystyle\frac{1}{2}(\underline{\alpha}(x)+\overline{\alpha}(x))$.

Thus, by sending $j\rightarrow+\infty$ in \eqref{limitatezza} and by using \eqref{claim} we get
\begin{equation*}
\left\langle u_0,\varphi\right\rangle_Z-\int_\Omega m(x)u_0(x)\varphi(x)dx=0\quad\mbox{for any}\,\,\varphi\in Z.
\end{equation*}
Thanks to this last formula and the fact that $\lambda_k<m(x)<\lambda_{k+1}$ we can use Lemma \ref{lemmanuovo} by obtaining $u_0=0$.

Step II) On the other hand, by using \eqref{limitatezza} with $\varphi=\displaystyle\frac{u_j}{\left\|u_j\right\|_Z}$ we get
\begin{equation*}
\left|1-\int_\Omega\frac{f(x,u_j(x))}{\left\|u_j\right\|_Z}\frac{u_j(x)}{\left\|u_j\right\|_Z}dx\right|\leq\frac{c}{\left\|u_j\right\|_Z},
\end{equation*}
for any $j\in\mathbb{N}$.
But, since $f(x,u_j)/\left\|u_j\right\|_{Z}$ is bounded in $L^{2}(\Omega)$, $u_j/\left\|u_j\right\|_Z$ converges to 0 in $L^2(\Omega)$ and $c/\left\|u_j\right\|_Z$ goes to 0, we get a contradiction.
\end{proof}

\begin{proof}[\bf Proof of Theorem \ref{Thsaddle}, when $\lambda_{k}<\underline{\alpha}(x)\leq\overline{\alpha}(x)<\lambda_{k+1}$]
At first, we prove that $\mathcal J$ satisfies the geometric structure required by the Saddle Point Theorem.
By Proposition \ref{prop Hk ortogonale} it follows that for any $M>0$ there exists $R>0$ such that if $u\in\mathbb{P}_{k+1}$ and $\left\|u\right\|_{Z}\geq R$, then $\mathcal J(u)\geq M$. If $u\in\mathbb{P}_{k+1}$ with $\left\|u\right\|_{Z}\leq R$,
by applying \eqref{f1}, \eqref{Pk}, and H\"{o}lder inequality we have
\begin{equation*}
\begin{alignedat}2
&\mathcal J(u)\geq -\int_{\Omega}F(x,u(x))dx\geq -\int_{\Omega}a(x)\left|u(x)\right|dx-\frac{b}{2}\int_{\Omega}\left|u(x)\right|^2dx\\
&\geq -\left\|a\right\|_{L^{2}(\Omega)}\left\|u\right\|_{L^{2}(\Omega)}-\frac{b}{2}\lambda^{-1}_{k+1}\left\|u\right\|^{2}_{Z}\geq -C_{R}
\end{alignedat}
\end{equation*}
for some constant $C_{R}=C(R, \Omega)>0$. So, we get
\begin{equation}
\mathcal J(u)\geq -C_{R}\quad\mbox{for any}\,\,u\in\mathbb{P}_{k+1}.
\end{equation}
By Proposition \ref{prop Hk} we can choose $T>0$ in such way that for any $u\in H_k$ with $\left\|u\right\|_{Z}=T$ we have
\begin{equation}
\sup_{u\in H_k,\,\left\|u\right\|_{Z}=T}\mathcal J(u)<-C_R\leq\inf_{u\in\mathbb{P}_{k+1}}\mathcal J(u),
\end{equation}
We have thus proved that $\mathcal J$ has the geometric structure of the Saddle Point Theorem (see \cite[Theorem 4.6]{rabinowitz}).
Now, it remains to check the validity of the Palais-Smale condition.
Let $c\in\mathbb{R}$ and let $\left\{u_{j}\right\}_{j\in\mathbb{N}}$ be a sequence in $Z$ such that
\begin{equation}\label{palais smale1}
\mathcal J(u_{j})\rightarrow c,
\end{equation}
and
\begin{equation}\label{palais smale2}
\sup\left\{\left|\mathcal J'(u_{j})(\varphi)\right|:\,\varphi\in Z,\,\left\|\varphi\right\|_{Z}=1\right\}\rightarrow 0\quad\mbox{for any}\,\,\varphi\in Z,
\end{equation}
as $j\rightarrow +\infty$.
By Proposition \ref{prop palais} $\left\{u_{j}\right\}_{j\in\mathbb{N}}$ is bounded, so by Lemma \ref{z}, up to a subsequence, there exists $u\in Z$ such that $u_{j}$ converges to $u$ strongly in $L^2(\Omega)$ and a.e. in $\Omega$, as well as weakly in $Z$.
Since, for any $\varphi\in Z$
\begin{equation*}
\mathcal J'(u_{j})(\varphi)=\left\langle u_j,\varphi\right\rangle_Z-\int_\Omega f(x,u_j(x))\varphi(x)dx,
\end{equation*}
by using also \eqref{f1} and \eqref{palais smale2} it follows that
\begin{equation}\label{1}
0=\left\|u\right\|^2_Z-\int_\Omega f(x,u(x))u(x)dx
\end{equation}
by taking $\varphi=u$, and also
\begin{equation}\label{2}
\left\|u_j\right\|^2_Z=\mathcal J'(u_{j})(u_j)+\int_\Omega f(x,u_j(x))u_j(x)dx\rightarrow\int_\Omega f(x,u(x))u(x)dx
\end{equation}
by taking $\varphi=u_j$ and sending $j\rightarrow+\infty$.
Indeed, for the last formula we observe that
\begin{equation*}
\left|f(x,u_j)u_j\right|\leq a(x)\left|u_j\right|+b\left|u_j\right|^2,
\end{equation*}
where the sequence on the right-hand side converges in $L^1(\Omega)$.

Thus, by combining \eqref{1} and \eqref{2} we get $\left\|u_j\right\|_Z\rightarrow\left\|u\right\|_Z$ and so $\left\{u_j\right\}_{j\in\mathbb{N}}$ converges strongly to $u$ in $Z$.
\end{proof}

\begin{proof}[\bf Proof of Corollary \ref{cor}]
Let $u_1$, $u_2\in Z$ be two solutions of problem \eqref{wf}. Then, we set $w:=u_1-u_2$ and
\begin{equation*}
m(x):=
\left\{\begin{array}{ll}
$$\displaystyle\frac{f(x,u_1(x))-f(x,u_2(x))}{u_1(x)-u_2(x)} & \mbox{if } u_1(x)\neq u_2(x),$$\\

$$\displaystyle\frac{1}{2}(\lambda_k +\lambda_{k+1}) & \mbox{if } u_1(x)=u_2(x).$$
\end{array}\right.
\end{equation*}
So, $m$ is a measurable function which verifies $\lambda_{k}< m(x)<\lambda_{k+1}$ a.e. in $\Omega$ thanks to \eqref{f2}. Moreover, \eqref{wf} implies that
\begin{equation*}
\left\langle w,\varphi\right\rangle_Z-\int_\Omega m(x)w(x)\varphi(x)dx=0\quad\mbox{for any}\,\,\varphi\in Z.
\end{equation*}
Thus, by Lemma \ref{lemmanuovo} it follows that $w= 0$.
\end{proof}


\begin{thebibliography}{99}

\bibitem{BFV} B. Barrios, A. Figalli, and E. Valdinoci, {\em Bootstrap regularity for integro-differential operators and its application to nonlocal minimal surfaces}, to appear in Ann. Scuola Norm. Sup. Pisa Cl. Sci. \textbf{5}, available online at {\tt http://arxiv.org/abs/1202.4606}\,.

\bibitem{brezis} {\sc H. Br\'ezis}, Analyse fonctionelle. Th\'{e}orie et
applications, {\em Masson}, Paris (1983).

\bibitem{valpal}{\sc E. Di Nezza, G. Palatucci and E. Valdinoci}, {\em Hitchhiker's guide to the fractional Sobolev spaces},
B. Sci. Math. \textbf{136} (2012), no. 5, 521--573.

\bibitem{Mugnai} {\sc D. Mugnai}, Lecture Notes,
Universit\`a di Perugia, available on line at {\tt http://www.dmi.unipg.it/~mugnai/download/corso0.pdf}\,.

\bibitem{rabinowitz} {\sc P.H. Rabinowitz}, Minimax methods in critical point theory with applications to differential equations,  {\em CBMS Reg. Conf. Ser. Math.}, 65, {\em American Mathematical Society}, Providence, RI (1986).

\bibitem{sv4}{\sc R. Servadei}, {\em The Yamabe problem in a non--local setting},
to appear in Adv. Nonlinear Anal. \textbf{3}, available at {\tt http://www.ma.utexas.edu/mp$\_$arc-bin/mpa?yn=12-40}\,.

\bibitem{svnew}{\sc R. Servadei and E. Valdinoci}, {\em Lewy-Stampacchia type estimates for variational inequalities driven by (non)local operators}, Rev. Mat. Iberoam. \textbf{29} (2013), no.~3, 1091--1126.

\bibitem{sv2}{\sc R. Servadei and E. Valdinoci}, {\em Mountain Pass solutions for non--local elliptic operators}, J. Math. Anal. Appl. \textbf{389} (2012), no.~2, 887--898.

\bibitem{sv3}{\sc R. Servadei and E. Valdinoci}, {\em The Brezis--Nirenberg result for the fractional Laplacian}, to appear in Trans. Amer. Math. Soc., available online at {\tt http://www.math.utexas.edu/mp$\_$arc-bin/mpa?yn=11-196}\,.

\bibitem{sv5}{\sc R. Servadei and E. Valdinoci}, {\em Variational methods
for non-local operators of elliptic type},
Discrete Contin. Dyn. Syst. \textbf{33} (2013), no.~5, 2105--2137.

\bibitem{sv6}{\sc R. Servadei and E. Valdinoci}, {\em Weak and viscosity solutions of the fractional Laplace equation}, to appear in Publ. Mat., available online at {\tt www.ma.utexas.edu/mp$\_$arc/c/12/12-82.ps.gz}\,.

\end{thebibliography}
\end{document}